\newtheorem{theorem}{Theorem}
\newtheorem{observation}{Observation}
\theoremstyle{definition}
{
\newtheorem{definition}{Definition}

}
\long\def\symbolfootnote[#1]#2{\begingroup
\def\thefootnote{\fnsymbol{footnote}}\footnote[#1]{#2}\endgroup}
\newcommand{\red}{\mathrm{red}}
\def\P{\mathbb{P}}
\title{Existence of $u$-representation of graphs}
\author[S. Kitaev]{Sergey Kitaev}
\thanks{Department of Computer and Information Sciences, University of Strathclyde, Glasgow G1 1XH, UK. Email: \texttt{sergey.kitaev@cis.strath.ac.uk}}
\begin{document}
\maketitle

\begin{abstract}
\noindent
Recently, Jones et al. introduced the study of $u$-representable graphs, where $u$ is a word over $\{1,2\}$ containing at least one 1. The notion of a $u$-representable graph is a far-reaching generalization of the notion of a word-representable graph studied in the literature in a series of papers. 

Jones et al.\ have shown that any graph is $11\cdots 1$-representable assuming that the number of 1s is at least three, while the class of 12-rerpesentable graphs is properly contained in the class of comparability graphs, which, in turn, is properly contained in the class of word-representable graphs corresponding to 11-representable graphs. Further studies in this direction were conducted by Nabawanda, who has shown, in particular, that the class of 112-representable graphs is not included in the class of word-representable graphs. 

Jones et al.\  raised a question on classification of $u$-representable graphs at least for small values of $u$. In this paper we show that if $u$ is of length at least 3 then any graph is $u$-representable. This rather unexpected result shows that from existence of representation point of view there are only two interesting non-equivalent cases in the theory of $u$-representable graphs, namely, those of $u=11$ and $u=12$.\\

\noindent {\bf Keywords:} $u$-representable graph, pattern avoidance, pattern matching
\end{abstract}

\section{Introduction}

The notion of a {\em word-representable graph} first appears 
in \cite{KP2008}.  A simple graph $G=(V,E)$ is word-representable if there exists a word $w$ over the alphabet $V$ such that letters $x$ and $y$, $x\neq y$, alternate in $w$ if and only if $xy$ is an edge in $E$. By definition, each letter in $V$ must occurs at least once in $w$. For example, the cycle graph on four vertices labeled by 1, 2, 3 and 4 in clockwise direction can be represented by the word 14213243.  Some graphs are word-representable, the others are not; the minimum non-word-representable graph is the {\em wheel} $W_5$ on 6 vertices. In fact, it is an NP-hard problem to determine whether or not a given graph is word-representable. These graphs have been studied in a series of papers (see~\cite{JKPR} and references therein), and they will be the main subject of an up-coming book~\cite{KL}.

The key observation that motivated \cite{JKPR} by Jones et al.\
was the fact that the study of word-representable graphs
is naturally connected with the study of patterns in
words (see \cite{Kit} for a comprehensive introduction to the respective area). That is, let $\P = \{1,2, \ldots \}$ be the set of positive integers and $\P^*$ be the set of all words over $\P$. 

If $n \in \P$, then
we let $[n] = \{1, \ldots, n\}$ and $[n]^*$ denote the set of
all words over $[n]$. Given a word $w =w_1 \cdots w_n$ in $\P^*$, we let
$A(w)$ be the set of letters occurring in $w$.  For example,
if $w = 641311346$, then $A(w) = \{1,3,4,6\}$.  If
$B \subseteq A(w)$, then we let $w_B$ be the word that results
from $w$ by removing all the letters in $A(w) \setminus B$.  For
example, if  $w = 4513113458$, then $w_{\{1,3,5\}} =
5131135$. If $u \in \P^*$, we let
$\red(u)$ be the word that is obtained from $u$ by replacing
each occurrence of the $i$-th smallest letter by $i$. For example, if $u = 247429$, then
$\red(u) = 123214$.

Given a word $u =u_1 \cdots u_j \in \P^*$ such
that $\red(u) =u$,  we say
that a word $w = w_1 \cdots w_n \in \P^*$ has a {\em $u$-match starting at position $i$} if $\red(w_i w_{i+1} \cdots w_{i+j-1}) =u$. Note that we can now rephrase the definition of a word-representable graph
by saying that a graph $G=([n],E)$ is word-representable
if and only if there is a word $w \in [n]^*$ such that $A(w)=[n]$ and for all $x,y \in [n]$,
$xy \in E$ if and only if $w_{\{x,y\}}$ has no $11$-matches.

The last observation leads  to the following more general definition appearing in \cite{JKPR}.
Given a word $u \in [2]^*$ such that $\red(u) =u$, we
say that a graph $G$ is {\em $u$-representable} if
and only if there is a labeling $G=([n],E)$, and a word $w \in [n]^*$ such
that $A(w)=[n]$ and for all $x,y \in [n]$, $xy \in E$ if and only if
$w_{\{x,y\}}$ has no $u$-matches. In this case we say
that $w$ {\em $u$-represents} $G=([n],E)$. It is easy to see that the class of $u$-representable graphs is hereditary.

Note that requiring existence of a proper labelling of a graph is essential in the last definition, which is a fundamental difference between the theory of word-representable graphs and its generalization, the theory of $u$-representable graphs. Indeed, it can be easily seen that there does not exist a word $12$-representing the path graph $P_3$ labelled by 1, 2, 3 with 1 and 3 corresponding to the leaves, while, say, the word 231 $12$-represents $P_3$ labelled by 1, 2, 3 with 2 and 3 corresponding to the leaves, thus making $P_3$ be $12$-representable. However, the results in this paper will show that labelling is not important when $u$ is of length 3 or more, since in this case any graph with any labelling is $u$-representable. 

Many results are obtained in the literature on 11-representable graphs. In particular, a characterization of these graphs is known in terms of so-called {\em semi-transitive orientations}; see~\cite{KL} and references therein. The class of 12-representable graphs remains less studied, although there is a rich theory behind it~\cite{JKPR}. In particular, one has the following inclusion:
$$\mbox{{\em permutation graphs}}\subset\mbox{12-representable graphs}\subset$$
$$\mbox{{\em comparability graphs}}\subset\mbox{11-representable graphs.}$$
Also, while any tree can be easily 11-represented using two copies of each letter, only those of them that are {\em double caterpillar} are 12-representable. Moreover, while the length of a word 11-representing a graph on $n$ vertices is bounded above by $2n^2$ (with the longest known to us cases being of length $\lfloor n^2/2\rfloor$), the upper bound for shortest words in the 12-representation case is $2n$.  Even though 12-representation deals with shorter lengths of respective words, one of the difficulties is that labelling graphs differently may lead to opposite results.

Given how involved the theories of word-representable (that is, 11-representable) and 12-representable graphs are, it comes as a surprise that every graph is $111$-representable~\cite{JKPR}.  In fact, Jones et al.\ \cite{JKPR} showed that for every $k \geq 3$, any graph is $1^k$-representable, where for a letter $x$, $x^k$ denotes $k$ copies of $x$ concatenated together. One of questions raised by Jones et al.\ is on whether we can classify $u$-representable graphs for small words $u$ such as $u =121$ or $u=112$. To shed some light on this direction, Nabawanda~\cite{Olivia} studied 112-representing graphs, in particular, showing that this class is not included in the class of word-representable graphs.

In this paper we show that if $u$ is of length at least 3 then {\em any} graph is $u$-representable. This rather unexpected result leaves only two interesting non-equivalent cases  in the theory of $u$-representable graphs from existence of representation point of view, namely, the cases of $11$-representable grpahs and $12$-representable graphs. Note that the class of $12$-representable graphs is trivially equivalent, via taking the complement defined in the next section, to the class of $21$-representable graphs. 

This paper is organized as follows. In Section~\ref{preliminaries} we will introduce all necessary definitions and some basic facts about $u$-representable graphs. Also, in Section~\ref{preliminaries} we sketch the proof appearing in~\cite{JKPR} that any graph is $1^k$-representable for $k\geq 3$. In Section~\ref{main-res-sec} we present our main results (Theorems \ref{thm-121}, \ref{thm-112}, \ref{thm-1122}) and in Section~\ref{concl-remarks} we suggest some directions for further research. 

\section{Preliminaries}\label{preliminaries}

A simple graph $G =(V,E)$ consists of a set of vertices
$V$ and a set of edges $E$ of the form $xy$, where
$x,y \in V$ and $x \neq y$.  All graphs considered in this paper are simple and finite, and we assume that $V=[n] = \{1, \ldots, n\}$ for some $n$.

\begin{definition}\label{def-match-repr} Let $u = u_1 \cdots u_k$ be a word in $\{1,2\}^*$ such
that $\red(u) =u$ and $k\geq 2$.  Then we say that a labeled graph
$G =([n],E)$ is {\em $u$-representable}
if there is a word $w \in \P^*$ such that $A(w)=[n]$ and for all $x,y \in [n]$, $xy  \in E$ if and only if
$w_{\{x,y\}}$ has no $u$-match. We say that an
 unlabeled graph $H$ is $u$-representable if there exits
a labeling of $H$, $H'=([n],E')$, such
that $H'$ is $u$-representable. \end{definition}

As is noted in \cite{JKPR}, there are some natural symmetries
among $u$-representable graphs. That
is, suppose that $u =u_1 \cdots u_k \in \P^*$ and
$\red(u) =u$.  Let the {\em reverse} of
$u$ be the word $u^r = u_k u_{k-1} \cdots u_1$.
Then for any word
$w \in \P^*$, it is easy to see that
$w$ has a $u$-match if and only if $w^r$ has a $u^r$-match.
This justifies the following
observation.

\begin{observation}\label{lem:reverse-u}\cite{JKPR}
Let $G =(V,E)$ be a graph and  $u\in\P^*$ be such that $\red(u) =u$. Then
$G$ is $u$-representable if and only if $G$ is
$u^r$-representable.
\end{observation}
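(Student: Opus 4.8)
The plan is to establish the statement at the level of labeled graphs first: for a fixed labeling $([n],E)$ and a word $w\in[n]^*$, the word $w$ $u$-represents $([n],E)$ if and only if $w^r$ $u^r$-represents $([n],E)$. The observation for unlabeled graphs then follows immediately from Definition~\ref{def-match-repr}, since any labeling of $H$ witnessing $u$-representability also witnesses $u^r$-representability, and conversely.

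First I would record two elementary facts about how reversal interacts with the operations used in the definition of a $u$-match. First, for any word $v=v_1\cdots v_k$ we have $\red(v^r)=(\red v)^r$, because reversing a word does not change which letter is its $i$-th smallest; consequently a word $w=w_1\cdots w_n$ has a $u$-match on a consecutive block $w_iw_{i+1}\cdots w_{i+k-1}$ if and only if $w^r$ has a $u^r$-match on the corresponding reversed block. Second, for any subset $B\subseteq A(w)$ one has $(w_B)^r=(w^r)_B$, since deleting the letters in $A(w)\setminus B$ and then reversing yields the same word as reversing and then deleting those letters; note also that $A(w)=A(w^r)$.

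Combining these two facts, for every pair $x,y\in[n]$ the word $w_{\{x,y\}}$ has no $u$-match if and only if $(w_{\{x,y\}})^r=(w^r)_{\{x,y\}}$ has no $u^r$-match. Hence $w$ satisfies the defining condition ``$xy\in E$ iff $w_{\{x,y\}}$ has no $u$-match'' precisely when $w^r$ satisfies ``$xy\in E$ iff $(w^r)_{\{x,y\}}$ has no $u^r$-match'', and $A(w)=[n]$ iff $A(w^r)=[n]$. This gives one direction; the converse is obtained by applying the same argument with the roles of $u$ and $u^r$ swapped, using $(u^r)^r=u$ and $(w^r)^r=w$.

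There is no substantive obstacle here: the entire content is the bookkeeping that reversal commutes with both $\red$ and with the letter-restriction map $w\mapsto w_B$. The only point deserving a moment of care is the tracking of block positions in the first fact — a $u$-match of $w$ starting at position $i$ corresponds to a $u^r$-match of $w^r$ starting at position $n-i-k+2$ — but since the definition refers only to the \emph{existence} of a match, these indices play no further role and can be suppressed.
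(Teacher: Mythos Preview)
Your argument is correct and is exactly the approach taken in the paper: the paper simply notes that $w$ has a $u$-match if and only if $w^r$ has a $u^r$-match and declares this to justify the observation. Your write-up merely fills in the (entirely routine) details of this claim, namely that reversal commutes with $\red$ and with $w\mapsto w_B$.
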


For any word
$w =w_1 \cdots w_k \in \P^*$ whose largest letter is $n$,
we define $w$'s {\em complement} $w^c$ to be $(n+1-w_1) \cdots (n+1-w_k)$.
It is easy to see that
$w$ has a $u$-match if and only if $w^c$ has a $u^c$-match.
Given a graph $G =([n],E)$, we let
the {\em supplement} of $G$ be defined by
$\bar{G} =([n],\bar{E})$ where for all $x,y \in [n]$,
$xy \in E$ if and
only if $n+1-x$ and $n+1 -y$ are adjacent in $\bar{G}$.  One can think of
the supplement of the graph $G =(V,E)$ as a relabeling where
one replaces each label $x$ by the label
$n+1-x$.

It is easy to see that if $w$ $u$-represents $G=([n],E)$
then $w^c$ $u^c$-represents $\bar{G}$, which justifies the following
observation.

\begin{observation}\label{lem:complement-u}\cite{JKPR}
Let $G =([n],E)$ be a graph, and
$u$ be a word in $[n]^*$ such
that $\red(u) =u$. Then
$G$ is $u$-representable if and only if $\bar{G}$ is
$u^c$-representable.
\end{observation}

The following observation will be used as the base case in inductive arguments in all our proofs in this paper.  

\begin{observation}\label{observ-Kn} If $G$ is the complete graph $K_n$ on vertex set $[n]$, then $G$ is $u$-representable for every $u$ of length at least $3$ by any permutation of $[n]$, in particular,  $w = 12\cdots n$ $u$-represents $K_n$.\end{observation}

The goal of this paper is to generalize the following theorem appearing in \cite{JKPR} to the case of an arbitrary $u$ of length at least 3. We sketch the proof of the theorem since it gives an idea of our approach to prove similar statements, although in different cases we use different constructions. In what follows, $p(w)$ denotes the {\em initial permutation} of $w$. That is, $p(w)$ is obtained from $w$ by removing all but the leftmost occurrence of each letter. For example,  $p(31443266275887)=31426758$.

\begin{theorem}\label{11111-matching}\cite{JKPR} For any fixed $k \geq 3$,
every graph $G$ is $1^k$-representable.
\end{theorem}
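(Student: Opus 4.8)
The plan is to proceed by induction on the number of vertices $n$, using Observation~\ref{observ-Kn} as the base case (in fact one inducts on the number of non-edges, starting from $K_n$). Suppose $G = ([n],E)$ is obtained from a $1^k$-representable graph $G'$ by deleting a single edge, say by making a chosen pair $\{a,b\}$ non-adjacent, and let $w$ be a word that $1^k$-represents $G'$. The goal is to modify $w$ into a word $w'$ that still $1^k$-represents all pairs correctly but additionally creates a $1^k$-match in $w'_{\{a,b\}}$ (that is, makes $w'_{\{a,b\}}$ contain $k$ consecutive equal letters after reduction — i.e. $k$ occurrences of one of $a,b$ in a row). The natural construction is to prepend a block to $w$: roughly, take $w' = B\, w$ where $B$ is a carefully chosen prefix built from the initial permutation $p(w)$, designed so that (i) the pair $\{a,b\}$ picks up $k$ consecutive like letters, while (ii) every other pair $\{x,y\}$ does not gain (or lose) a $1^k$-match.

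The key device is the initial permutation $p(w)$: if one prepends to $w$ a word whose restriction to any two-letter set $\{x,y\}$ either is empty or is an alternating-type pattern that can be controlled, then the projection $w'_{\{x,y\}}$ is the old $w_{\{x,y\}}$ with a short, understood prefix attached, and whether a run of $k$ equal letters is created can be read off locally. Concretely, I would choose $B$ to be a word on the vertex set that repeats the letters $a$ and $b$ enough times (and threads in the other vertices in the order given by $p(w)$, once each, so no other pair sees a long run) so that $B_{\{a,b\}}$ already contains $k-1$ or $k$ copies of the same letter adjacent to what $w$ contributes, producing the desired $1^k$-match for $\{a,b\}$; for any other pair $\{x,y\}$, $B_{\{x,y\}}$ has length at most a constant (at most $2$ or so, since $x,y$ each appear boundedly often in $B$), which is too short to create a new $1^k$-match for $k\ge 3$, and prepending a short prefix also cannot destroy an existing $1^k$-match that lives entirely inside $w_{\{x,y\}}$.

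The steps in order: (1) set up the induction on the number of missing edges of $G$, base case $K_n$ by Observation~\ref{observ-Kn}; (2) given $G$ with one more non-edge $\{a,b\}$ than some already-handled $G'$, take an assumed word $w$ for $G'$; (3) define the prefix $B$ explicitly in terms of $p(w)$ with extra repetitions of $a$ and $b$, set $w' = Bw$, and verify $A(w') = [n]$; (4) check that $w'_{\{a,b\}}$ has a $1^k$-match (it did not before, now it does); (5) check that for every pair $\{x,y\}\ne\{a,b\}$ the restriction $w'_{\{x,y\}}$ has a $1^k$-match if and only if $w_{\{x,y\}}$ did — splitting into the sub-cases where $\{x,y\}$ was an edge of $G'$ (no $1^k$-match in $w$, and $B$ is too short to create one) and where it was a non-edge (there is a $1^k$-match in $w$, still present in $w'$ since we only prepended letters).

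The main obstacle is step (5), the bookkeeping that the prefix $B$ is simultaneously "long enough in the $\{a,b\}$ direction" and "short enough in every other direction." This is exactly where the hypothesis $k \ge 3$ is essential: with $k=2$ even a two-letter prefix suffices to create an unwanted $11$-match ($xy$ followed by $x\cdots$ already gives $x$ then $x$), but for $k\ge 3$ a prefix that contributes at most two copies of any letter other than $a,b$ cannot manufacture a run of $k$ equal letters, so the only pair whose status changes is $\{a,b\}$. Making the construction of $B$ precise (the exact interleaving of the $p(w)$-letters with the padding copies of $a$ and $b$, and confirming no accidental long run appears at the seam between $B$ and $w$) is the one place where care is needed; everything else is immediate from the definitions and Observation~\ref{observ-Kn}.
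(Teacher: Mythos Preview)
Your proposal is correct and follows essentially the same approach as the paper: both argue by induction on the number of non-edges starting from $K_n$, and for the inductive step prepend to $w$ a prefix built from $p(w)$ together with extra copies of one endpoint of the deleted edge, then verify case by case that only the pair $\{a,b\}$ acquires a $1^k$-match. The paper's explicit choice is $w' = i^{k-1}\pi\, i\, p(w)\, w$ with $\pi$ any permutation of $[n]\setminus\{i,j\}$, which is precisely the kind of block $B$ you describe (note that only one of the two endpoints needs to be repeated, and the key property of $p(w)$ you will want at the seam is that the last letter of $(p(w))_{\{x,y\}}$ always differs from the first letter of $w_{\{x,y\}}$).
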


\begin{proof} If $G=K_n$ then by Observation~\ref{observ-Kn} $G$ is $1^k$-representable.

We proceed by induction on the number of edges in a graph with the base case being $K_n$. Our goal is to show that if $G$ is $1^k$-representable, then the graph $G'$ obtained from $G$ by removing any edge
$ij$ is also $1^k$-representable.

Suppose that $w$ $1^k$-represents $G=([n],E)$ and $\pi$ is any permutation of $[n]\backslash\{i,j\}$. Then the word
$$w'=i^{k-1}\pi ip(w)w$$
$1^k$-represents $G'$, which can be checked by considering several cases and to make sure that adding $i^{k-1}\pi ip(w)$ to the left of $w$ results in removing the edge $ij$ but not affecting any other edge/non-edge in $G$. See \cite{JKPR} for further details of the proof.
\end{proof}

\section{$u$-representing graphs for $u$ of length at least 3}\label{main-res-sec}

To proceed, we make a convention that if $ij$ denotes an edge in a graph then $i<j$. Also, for a word $w=w_1w_2\cdots w_k\in\{1,2\}^*$ we let $w[i,j]$ denote the word obtained from $w$ by using the substitution: $1\rightarrow i$ and $2\rightarrow j$ for some letters $i$ and $j$. Finally, recall that for a letter $x$, $x^k$ denotes $k$ copies of $x$ concatenated together.

Note that by Observation~\ref{lem:complement-u}, while studying $u$-representation of graphs, we can assume that $u=1u'$ for some $u'\in\{1,2\}^*$. Taking into account that the case $u=1^k$ is given by Theorem~\ref{11111-matching}, we will only need to consider the following three disjoint cases for $u=1u_2u_3\cdots u_k$, where $k\geq 3$:  
\begin{itemize}
\item $u=1^a2^b1u_{a+b+2}u_{a+b+3}\cdots u_{k}$, where $a,b\geq 1$ (Theorem~\ref{thm-121}); 
\item $u=1^{k-1}2$ (Theorem~\ref{thm-112}); 
\item $u=1^a2^b$, where $a,b\geq 2$ and $a+b=k$ (Theorem~\ref{thm-1122}). 
\end{itemize}
Note that the case of $u=12^{k-1}$ is equivalent to the case of $u=1^{k-1}2$ via Observations~\ref{lem:reverse-u} and~\ref{observ-Kn} (by applying reverse complement to words $u$-representing a graph), and thus it is omitted. In each of the three cases above, we will show that any graph can be $u$-represented.

\begin{theorem}\label{thm-121} Let $u=1^a2^b1u_{a+b+2}u_{a+b+3}\cdots u_{k}$, where $a,b\geq 1$ and $u_{a+b+2}u_{a+b+3}\cdots u_{k}\in\{1,2\}^*$. Then every graph $G$ is $u$-representable.
\end{theorem}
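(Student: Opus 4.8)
The plan is to follow the same inductive strategy used in the proof of Theorem~\ref{11111-matching}: induct on the number of edges of $G$, with base case $K_n$ handled by Observation~\ref{observ-Kn}. So suppose $w$ $u$-represents $G=([n],E)$, fix an edge $ij\in E$ (with $i<j$), and let $G'$ be $G$ with the edge $ij$ deleted. I want to exhibit a word $w'$ that $u$-represents $G'$. The idea is to prepend to $w$ a carefully chosen prefix $P$ so that: (i) the two-letter restriction $(w')_{\{i,j\}}$ acquires a $u$-match (killing the edge $ij$), and (ii) for every other pair $\{x,y\}$, the restriction $(w')_{\{x,y\}}$ has a $u$-match if and only if $w_{\{x,y\}}$ does (preserving every other edge and non-edge).

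First I would design the prefix to create a $u$-match in $(w')_{\{i,j\}}$. Since $u=1^a2^b1u_{a+b+2}\cdots u_k$ begins with $a$ copies of the smaller letter, then $b$ copies of the larger, then one more small letter, I would put something like $i^a j^b i$ at the front, followed by enough additional $i$'s and $j$'s to realize the tail $u_{a+b+2}\cdots u_k$ under the substitution $1\mapsto i,\ 2\mapsto j$ — in other words, begin $w'$ with a block of the form $u[i,j]$. That guarantees $(w')_{\{i,j\}}$ starts with a literal copy of $u$, hence has a $u$-match, so $ij\notin E(G')$ as desired. (One must double-check that in $w$ itself $(w)_{\{i,j\}}$ was $u$-match-free because $ij\in E$, and that prepending this block cannot destroy a $u$-match — it cannot, since matches are preserved under taking a factor-extension on the left only when... — this is the kind of point to verify carefully.)

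The real work is step (ii): ensuring no \emph{new} $u$-match is created, and no \emph{old} one destroyed, in $(w')_{\{x,y\}}$ for $\{x,y\}\ne\{i,j\}$. The standard device, as in Theorem~\ref{11111-matching} where the prefix was $i^{k-1}\pi\, i\, p(w)$, is to follow the "edge-killing" block by (a) a permutation $\pi$ of $[n]\setminus\{i,j\}$ (or of all of $[n]$, suitably arranged), and (b) a copy of the initial permutation $p(w)$, so that when we restrict to a pair $\{x,y\}$ not equal to $\{i,j\}$, the prefix contributes at most one occurrence of each of $x,y$ — too few, or too disconnected from the rest, to complete a $u$-match, since $u$ has length $k\ge 3$ and requires at least two occurrences of the letter $1$ (here $a\ge 1$ and there is the extra $1$ after the $2^b$ block, so $u$ contains at least two $1$'s). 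The insertion of $p(w)$ before $w$ "absorbs" any match that would straddle the prefix and $w$, reducing it to a match already present in $w$. I would organize the verification into cases according to whether $\{x,y\}$ meets $\{i,j\}$ in zero or one elements, and in the latter case according to the relative order of the shared letter and the other letter.

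The step I expect to be the main obstacle is precisely controlling the interaction of the edge-killing block $u[i,j]$ with pairs $\{x,y\}$ where exactly one of $x,y$ lies in $\{i,j\}$: the block $i^a j^b i\cdots$ contributes several copies of $i$ (and of $j$), so for a pair like $\{i,y\}$ the restriction begins with $i^a (\text{stuff}) i \cdots$ followed by $\pi$, $p(w)$, $w$, and I must be sure this cannot accidentally spell out $u$. The fix is to choose the internal order of $\pi$ (e.g.\ placing $y$ either immediately after the block or in a controlled position) and to rely on the fact that after the prefix each letter $\ne i,j$ appears in $p(w)$ exactly once in its "first-occurrence" position, so a putative match using prefix-$i$'s plus a later $y$ would force a $u$-match already visible in $w$. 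Getting this bookkeeping right for the general tail $u_{a+b+2}\cdots u_k$ — rather than the all-$1$'s tail of Theorem~\ref{11111-matching} — is where the argument needs genuine care, though I expect no essentially new idea beyond a more elaborate case analysis and possibly padding the prefix with a few extra copies of $i$ or $j$ to match the shape of $u$.
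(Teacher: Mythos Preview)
Your overall strategy (induct on edges, base case $K_n$, prepend a prefix starting with $u[i,j]$) matches the paper, but the specific prefix you propose --- $u[i,j]\,\pi\,p(w)$ with $\pi$ a permutation of $[n]\setminus\{i,j\}$, lifted straight from the $1^k$ proof --- does not work here, and the patches you suggest (reordering $\pi$, padding with extra $i$'s or $j$'s) do not save it.

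Here is a concrete failure. Take $u=1212$ (so $a=b=1$), $n=3$, $i=1$, $j=3$, and let $s=2$ be a vertex with $is\in E$. Your prefix gives
\[
w'_{\{1,2\}}=\underbrace{11}_{u[1,3]\text{ restricted}}\;\underbrace{2}_{\pi}\;\underbrace{12}_{p(w)_{\{1,2\}}}\;w_{\{1,2\}}=11212\,w_{\{1,2\}},
\]
and positions $2$--$5$ read $1212=u$: you have accidentally killed the edge $is$. Reordering $\pi$ is impossible here ($\pi=2$ is forced), and extra copies of $i$ or $j$ only shift the problem or vanish under restriction to $\{i,s\}$. The underlying issue is that $u[i,j]$ dumps many copies of $i$ into $w'_{\{i,s\}}$, and a one-copy barrier (a permutation, or $p(w)$) is too thin to stop those $i$'s from linking up with $s$'s on the right to form $1^a2^b1\cdots$.

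The missing idea, which the paper supplies, is to make the barrier \emph{thick in every letter}: take
\[
w'=u[i,j]\,1^{b+1}2^{b+1}\cdots n^{b+1}\,w.
\]
Now for $m<s$ with $\{m,s\}\neq\{i,j\}$ the restriction to $\{m,s\}$ contains a block $s^{b+1}$ (or $i^{b+1}$, depending on the case) sitting between the $u[i,j]$-debris and $w$. Since $u=1^a2^b1\cdots$ has \emph{exactly} $b$ consecutive $2$'s before it must return to $1$, no $u$-match can cross a run of $b+1$ copies of the larger letter; and since $u$ begins with $1$, no match can start inside such a run. This is the structural fact about $u$ you need to exploit, and it is not captured by the $1^k$ template.
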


\begin{proof}  
If $G=K_n$ then by Observation~\ref{observ-Kn} $G$ is $u$-representable. We proceed by induction on the number of edges in a graph with the base case being $K_n$. Our goal is to show that if $G$ is $u$-representable, then the graph $G'$ obtained from $G$ by removing any edge $ij$ ($i<j$) is also $u$-representable.

Suppose that $w$ $u$-represents $G$. We claim that the word
$$w'=u[i,j]1^{b+1}2^{b+1}\cdots n^{b+1}w$$
$u$-represents $G'$. 

Indeed, the vertices $i$ and $j$ are not connected any more because $w'_{\{i,j\}}$ contains $u$ (formed by the $k$ leftmost elements of $w'_{\{i,j\}}$). Also, no new edge can be created in $G$ because $w$ is a subword of $w'$. Thus, we only need to show that each edge $ms$ ($m<s$) $u$-represented by $w$ is still $u$-represented by $w'$ if $\{m,s\} \neq \{i,j\}$. We have five cases to consider.

\begin{enumerate}
\item\label{121case=m=i=etc} Suppose $m=i$ and $s\neq j$ $(i<s)$. In this case, $w'_{\{i,s\}} = i^ds^{b+1}w_{\{i,s\}}$, where $d\geq a+b+2$. Because of $s^{b+1}$, there is no $u$-match in $w'_{\{i,s\}}$ that begins to the left of $w_{\{i,s\}}$, and because $w_{\{i,s\}}$ itself has no $u$-matches, $w'$ $u$-represents the edge $is$.
\item\label{121case=s=i=etc} Suppose $s=i$ and $m<i$. In this case, $w'_{\{m,i\}} = i^dm^{b+1}i^{b+1}w_{\{m,i\}}$, where $d\geq a+1$. Because of $i^{b+1}$, there is no $u$-match in $w'_{\{m,i\}}$ that begins to the left of $w_{\{m,i\}}$, and because $w_{\{m,i\}}$ itself has no $u$-matches, $w'$ $u$-represents the edge $mi$.
\item Suppose $m\neq i$ and $s=j$ $(m<j)$. This case is essentially the same as (\ref{121case=s=i=etc}) after the substitution $i\rightarrow j$.
\item Suppose $m=j$ and $s>j$. This case is essentially the same as (\ref{121case=m=i=etc}) after the substitution $i\rightarrow j$.
\item Suppose $m,s \not \in \{i,j\}$. In this case, $w'_{\{m,s\}} = m^{b+1}s^{b+1}w_{\{m,s\}}$. Because of $s^{b+1}$, there is no $u$-match in $w'_{\{m,s\}}$ that begins to the left of $w_{\{m,s\}}$, and because $w_{\{m,s\}}$ itself has no $u$-matches, $w'$ $u$-represents the edge $ms$.
\end{enumerate}

We have proved that $w'$ $u$-represents $G'$, as desired.\end{proof}

\begin{theorem}\label{thm-112} Let $u=1^{k-1}2$, where $k\geq 3$. Then every graph $G$ is $u$-representable.
\end{theorem}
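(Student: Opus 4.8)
The plan is to mimic the inductive strategy used in the proof of Theorem~\ref{thm-121}: start from $K_n$, which is $u$-representable by Observation~\ref{observ-Kn}, and show that if $w$ $u$-represents a graph $G=([n],E)$ then one can prepend a suitable prefix to $w$ so that the resulting word $w'$ $u$-represents the graph $G'$ obtained by deleting a single edge $ij$ with $i<j$. Since $u=1^{k-1}2$, a $u$-match in a two-letter word $w_{\{x,y\}}$ (with $x<y$) is an occurrence of $k-1$ copies of the smaller letter $x$ followed by one copy of the larger letter $y$. The prefix I would try is something like $w' = i^{k-1}j\,\pi\, w$, or more robustly $w' = i^{k-1}j\,1^{k-1}2^{k-1}\cdots n^{k-1}\,w$, designed so that (a) the leftmost block $i^{k-1}j$ creates a $u$-match in $w'_{\{i,j\}}$, killing the edge $ij$; (b) the padding block $1^{k-1}2^{k-1}\cdots n^{k-1}$ prevents any $u$-match that would straddle the boundary between the new prefix and $w$; and (c) no $u$-match is created entirely within the prefix for any other pair, and the copy of $w$ at the end still governs all other adjacencies exactly as before.

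First I would verify (a): in $w'_{\{i,j\}}$ the first $k$ symbols are $i^{k-1}j$, which reduce to $1^{k-1}2=u$, so $ij\notin E(G')$ as required; and no new edge is created because $w$ is a suffix of $w'$, so any pair that alternates in $w'$ already alternated in $w$. Then the heart of the argument is showing that for every pair $\{m,s\}\neq\{i,j\}$ with $ms\in E$, the word $w'_{\{m,s\}}$ still has no $u$-match. I would split into the same five cases as in Theorem~\ref{thm-121}: $m=i$ and $s\neq j$; $s=i$ and $m<i$; $s=j$ and $m\neq i$; $m=j$ and $s>j$; and $m,s\notin\{i,j\}$. In each case one writes down $w'_{\{m,s\}}$ explicitly — it will look like $x^{c_1}y^{c_2}w_{\{m,s\}}$ or $x^{c_1}y^{c_2}x^{c_3}w_{\{m,s\}}$ for appropriate exponents coming from the prefix — and checks that, because the unique larger letter $y$ of the pair appears in the padding block a full block of $k-1$ before we reach $w_{\{m,s\}}$, any candidate $u$-match of the form (many small)(one large) lying in the prefix-plus-boundary region is blocked: after the last large-letter occurrence in the padding there is only a bounded run of small letters before the next large letter, never $k-1$ of them ending exactly before a $y$. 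Combined with the fact that $w_{\{m,s\}}$ itself has no $u$-match, this gives the claim.

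The main obstacle I anticipate, and the place where this proof genuinely differs from Theorem~\ref{thm-121}, is that in $u=1^{k-1}2$ the single $2$ sits at the \emph{end} of $u$ rather than in the interior, so a dangerous $u$-match can end right at the first symbol of $w_{\{m,s\}}$ if that symbol is the larger of the two letters and it is preceded by a long run of the smaller letter. The exponent $b+1$ that worked in Theorem~\ref{thm-121} was tuned to the length of the trailing block $2^b$ of $u$ there; here I must instead choose the padding exponent (and the structure of the prefix) so that the run of small letters immediately preceding \emph{any} large-letter occurrence that precedes $w_{\{m,s\}}$ has length at most $k-2$, hence too short to complete a $1^{k-1}2$ pattern. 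Using a padding word $1^{k-1}2^{k-1}\cdots n^{k-1}$ — or interleaving single copies, $1\,2\,\cdots\,n\,1\,2\,\cdots\,n$ repeated $k-1$ times — is the natural fix, and the bookkeeping in case~\ref{121case=s=i=etc} above (where the pattern letter $i$ appears both in the leading block $i^{k-1}$ and again in the padding) is where I would be most careful, making sure the intervening letters of the padding break up what would otherwise be a run of $k-1$ or more copies of $i$ landing just before a $j$. Once the exponents are pinned down, the five-case verification is routine in the same style as the previous proof.
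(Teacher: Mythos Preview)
Your overall strategy---induction on the number of edges, with $K_n$ as base case and a prefix construction for the inductive step---matches the paper's. But the specific prefix you propose has a genuine gap that no choice of padding can repair.

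The problem is the leading block $i^{k-1}j$. Take any $s>i$ with $s\neq j$ and $is\in E$. Restricting $i^{k-1}j$ to $\{i,s\}$ gives $i^{k-1}$. Now look at the very next letter of $w'_{\{i,s\}}$: it is either $i$ (extending the run) or $s$ (completing a $u$-match $i^{k-1}s$). Either way a $u$-match is eventually forced---with your padding $1^{k-1}2^{k-1}\cdots n^{k-1}$ the restriction begins $i^{2k-2}s\cdots$, and with the interleaved version $(12\cdots n)^{k-1}$ it begins $i^{k}s\cdots$; both contain $i^{k-1}s$. So your construction kills \emph{every} edge $is$ with $s>i$, not just $ij$. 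You correctly diagnose that the danger is a long run of the smaller letter ending at a larger one, but the run of length $k-1$ is already built into your initial block, before any padding has a chance to intervene.

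The paper's fix is to start with only $i^{k-2}$, then list all letters in $\{i+1,\ldots,n\}\setminus\{j\}$, then the pair $ij$, then two more increasing segments, then $w$. The point is that for the pair $\{i,j\}$ the block $i^{k-2}$ is followed (after deleting irrelevant letters) directly by the $i$ in ``$ij$'', yielding $i^{k-1}j$; but for any other $s>i$ the block $i^{k-2}$ is immediately followed by an $s$ coming from that first increasing segment, so the restriction begins $i^{k-2}s\cdots$ and the run of $i$'s is one short of dangerous. In other words, the $(k{-}1)$-st copy of $i$ must be \emph{hidden} from every $s\neq j$, and the only way to do that is to place it after an occurrence of each such $s$ rather than at the very front.
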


\begin{proof}  
If $G=K_n$ then by Observation~\ref{observ-Kn} $G$ is $u$-representable. We proceed by induction on the number of edges in a graph with the base case being $K_n$. Our goal is to show that if $G$ is $u$-representable, then the graph $G'$ obtained from $G$ by removing any edge $ij$ ($i<j$) is also $u$-representable.

Suppose that $w$ $u$-represents $G$. We claim that the word $w'$ defined as
$$i^{k-2}(i+1)(i+2)\cdots (j-1)(j+1)(j+2)\cdots nij(j+1)\cdots n(i+1)(i+2)\cdots nw$$
$u$-represents $G'$. 

Indeed, the vertices $i$ and $j$ are not connected any more because $w'_{\{i,j\}}$ contains $i^{k-1}j$. Also, no new edge can be created in $G$ because $w$ is a subword of $w'$. Thus, we only need to show that each edge $ms$ ($m<s$) $u$-represented by $w$ is still $u$-represented by $w'$ if $\{m,s\} \neq \{i,j\}$. We have five cases to consider. 

\begin{enumerate}
\item Suppose $m=i$ and $s\neq j$ $(i<s)$. In this case, $w'_{\{i,s\}} = i^{k-2}sissw_{\{i,s\}}$ or $w'_{\{i,s\}} = i^{k-2}sisw_{\{i,s\}}$ depending on whether $s>j$ or not, respectively. Because of the subword $sis$ observed in both cases, there is no $u$-match in $w'_{\{i,s\}}$ that begins to the left of $w_{\{i,s\}}$, and because $w_{\{i,s\}}$ itself has no $u$-matches, $w'$ $u$-represents the edge $is$.

\item\label{subcase-1} Suppose $s=i$ and $m<i$. However, in this case no such $m$ appears to the left of $w$ in $w'$, and thus there is no $u$-match in $w'_{\{m,i\}}$ that begins to the left of $w_{\{m,i\}}$. Since $w_{\{m,i\}}$ has no $u$-match, $w'$ $u$-represents the edge $mi$.

\item Suppose $m\neq i$ and $s=j$ $(m<j)$. In this case, we can assume that $m>i$ since otherwise we have a case similar to (\ref{subcase-1}). We have that $w'_{\{m,j\}} = mjmjw_{\{m,j\}}$. Clearly, no $u$-match can begin at $mjmj$. Since $w_{\{m,j\}}$ has no $u$-match, $w'$ $u$-represents the edge $mj$.

\item Suppose $m=j$ and $s>j$. In this case, we have $w'_{\{j,s\}} = sjsjsw_{\{m,j\}}$. Clearly, no $u$-match can begin at one of the letters in the subword $sjsjs$. Since $w_{\{j,s\}}$ has no $u$-match, $w'$ $u$-represents the edge $js$.

\item Suppose $m,s \not \in \{i,j\}$. In this case, we can assume that $m>i$ since otherwise we have a case similar to (\ref{subcase-1}). We have three subcases to consider here.
\begin{itemize}
\item If $i<m<s<j$ then $w'_{\{m,s\}} = msmsw_{\{m,s\}}$.
\item If $i<m<j<s$ then $w'_{\{m,s\}} = mssmsw_{\{m,s\}}$.
\item If $j<m<s$ then $w'_{\{m,s\}} = msmsmsw_{\{m,s\}}$.
\end{itemize}
In either case, it is easy to see that $u$-match cannot begin to the left of $w_{\{m,s\}}$, and since $w_{\{m,s\}}$ itself has no $u$-match, $w'$ $u$-represents the edge $ms$.
\end{enumerate}

We have proved that $w'$ $u$-represents $G'$, as desired. \end{proof}

\begin{theorem}\label{thm-1122} Let $u=1^a2^b$, where $a,b\geq 2$ and $a+b=k\geq 3$. Then every graph $G$ is $u$-representable.
\end{theorem}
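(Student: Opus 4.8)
The plan is to follow the inductive scheme used for Theorems~\ref{thm-121} and~\ref{thm-112}: induct on the number of edges of $G$, with the base case $G=K_n$ supplied by Observation~\ref{observ-Kn}. For the inductive step, assume that a word $w$ $u$-represents $G=([n],E)$, fix an edge $ij\in E$ with $i<j$, and construct a word $w'$ that $u$-represents $G'=([n],E\setminus\{ij\})$ by prepending a short prefix $P$ to $w$, so that $w'=P\,w$. Since $w$ is a subword of $w'$, no non-edge of $G$ can become an edge; hence it suffices to arrange (i) that $w'_{\{i,j\}}$ contains a $u$-match, and (ii) that for every pair $\{m,s\}\ne\{i,j\}$ the word $w'_{\{m,s\}}$ contains no $u$-match other than those already present in $w_{\{m,s\}}$.

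Because $u=1^a2^b$, a $u$-match in a two-letter word with smaller letter $x$ and larger letter $y$ is exactly a block of at least $a$ consecutive $x$'s immediately followed by a block of at least $b$ consecutive $y$'s. With this in mind, the construction I would try is
$$w'=i^a\,j^b\,(12\cdots n)(12\cdots n)\,w,$$
that is, $P=i^aj^b$ followed by two increasing passes over $[n]$. Condition (i) is then immediate, since $w'_{\{i,j\}}$ begins with the $u$-match $i^aj^b$. For (ii), the two facts I would record about the restriction $P_{\{x,y\}}$ of $P$ to a pair $\{x,y\}$ with $x<y$ and $\{x,y\}\ne\{i,j\}$ are: first, $P_{\{x,y\}}$ ends with the two-letter word $xy$, so its last letter is $y$ and exactly one copy of $x$ precedes its final (length-one) run of $y$'s; and second, every maximal run of the smaller letter $x$ in $P_{\{x,y\}}$ has length $1$, except possibly for a run at the very front (this exception occurs only when $\{x,y\}=\{i,s\}$ with $i<s$ and $s\ne j$, where $P_{\{x,y\}}=i^{a+1}sis$, and when $\{x,y\}=\{j,s\}$ with $j<s$, where $P_{\{x,y\}}=j^{b+1}sjs$), and such a front run is immediately followed by a single copy of $y$.

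Granting these facts, I would finish by locating a hypothetical $u$-match in $w'_{\{m,s\}}$ for a pair $\{m,s\}\ne\{i,j\}$ with $m<s$. It cannot lie entirely within $P_{\{m,s\}}$: the only run of $m$'s long enough to be the $1^a$ block is a front run, and a front run is followed by just one copy of $s$, which cannot be the start of the $2^b$ block since $b\ge2$. It cannot lie entirely within $w_{\{m,s\}}$: any such match is already accounted for. And it cannot straddle the junction between $P_{\{m,s\}}$ and $w_{\{m,s\}}$: the last letter of $P_{\{m,s\}}$ is $s$, so the $1^a$ block of a straddling match would have to lie entirely inside $P_{\{m,s\}}$, but $P_{\{m,s\}}$ carries only a single $m$ immediately before its final run of $s$'s, so $a\ge2$ makes this impossible as well. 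This is precisely the point at which two passes $12\cdots n$ rather than one are required: with a single pass, $P_{\{i,s\}}$ would equal $i^{a+1}s$, which ends with $i^as$ and could combine with an $s^{b-1}$ prefix of $w_{\{i,s\}}$ to produce a stray $u$-match.

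The step I expect to be the main obstacle is this straddling analysis — verifying, uniformly over the several types of pairs (those of the form $\{i,s\}$ or $\{j,s\}$, and those with $m,s\notin\{i,j\}$, each possibly split according to the order of the letters), that the tail of $P$ never lines up with the head of $w$ to create a fresh occurrence of $x^ay^b$. Both hypotheses $a\ge2$ and $b\ge2$ enter essentially here: they are exactly what prevents a single stray letter at an end of a restricted prefix from completing a $u$-match, which is consistent with the borderline cases $u=1^{k-1}2$ and $u=12^{k-1}$ requiring the separate treatment of Theorem~\ref{thm-112}.
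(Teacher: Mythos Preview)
Your proposal is correct and is essentially the paper's own proof: the construction $w'=i^aj^b(12\cdots n)(12\cdots n)w$ is exactly the paper's $w'=u[i,j]\,12\cdots n\,12\cdots n\,w$, and your verification covers the same ground. The only difference is organizational---where the paper runs through five cases on how $\{m,s\}$ meets $\{i,j\}$, you isolate two structural facts about $P_{\{m,s\}}$ (it ends in $ms$, and every non-initial run of the smaller letter has length~$1$) and argue uniformly, which is arguably cleaner but amounts to the same check.
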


\begin{proof}  
If $G=K_n$ then by Observation~\ref{observ-Kn} $G$ is $u$-representable. We proceed by induction on the number of edges in a graph with the base case being $K_n$. Our goal is to show that if $G$ is $u$-representable, then the graph $G'$ obtained from $G$ by removing any edge $ij$ ($i<j$) is also $u$-representable.

Suppose that $w$ $u$-represents $G$. We claim that the word
$$w'=u[i,j]12\cdots n12\cdots n w$$
$u$-represents $G'$. 

Indeed, the vertices $i$ and $j$ are not connected any more because $w'_{\{i,j\}}$ contains $u$ (formed by the $k$ leftmost elements of $w'_{\{i,j\}}$). Also, no new edge can be created in $G$ because $w$ is a subword of $w'$. Thus, we only need to show that each edge $ms$ ($m<s$) $u$-represented by $w$ is still $u$-represented by $w'$ if $\{m,s\} \neq \{i,j\}$. We have five cases to consider.

\begin{enumerate}
\item\label{last-thm-121case=m=i=etc} Suppose $m=i$ and $s\neq j$ $(i<s)$. In this case, $w'_{\{i,s\}} = i^{a+1}sisw_{\{i,s\}}$. Because of the subword $sis$, keeping in mind that $a,b\geq 2$, there is no $u$-match in $w'_{\{i,s\}}$ that begins to the left of $w_{\{i,s\}}$, and because $w_{\{i,s\}}$ itself has no $u$-matches, $w'$ $u$-represents the edge $is$.
\item\label{last-thm-121case=s=i=etc} Suppose $s=i$ and $m<i$. In this case, $w'_{\{m,i\}} = i^{a+1}mimiw_{\{m,i\}}$. Because $a,b\geq 2$, there is no $u$-match in $w'_{\{m,i\}}$ that begins to the left of $w_{\{m,i\}}$, and because $w_{\{m,i\}}$ itself has no $u$-matches, $w'$ $u$-represents the edge $mi$.
\item Suppose $m\neq i$ and $s=j$ $(m<j)$. This case is essentially the same as (\ref{last-thm-121case=s=i=etc}) after the substitution $i\rightarrow j$.
\item Suppose $m=j$ and $s>j$. This case is essentially the same as (\ref{last-thm-121case=m=i=etc}) after the substitution $i\rightarrow j$.
\item Suppose $m,s \not \in \{i,j\}$. In this case, $w'_{\{m,s\}} = msmsw_{\{m,s\}}$. Because $a,b\geq 2$, there is no $u$-match in $w'_{\{m,s\}}$ that begins to the left of $w_{\{m,s\}}$, and because $w_{\{m,s\}}$ itself has no $u$-matches, $w'$ $u$-represents the edge $ms$.
\end{enumerate}

We have proved that $w'$ $u$-represents $G'$, as desired. \end{proof}

\section{Concluding remarks}\label{concl-remarks}

We note that the goal of this paper was not in coming up with optimal (shortest possible) $u$-representations of graphs for a given $u$, rather in showing that such representation exists for any $u$ of length at least 3. Thus, we leave it as an open question to improve the upper bounds that can be obtained from our constructions on the length of words $u$-representing graphs for various $u$.

Next step in the theory of graph representations using pattern avoiding words should be in considering ``classical patterns'' rather than ``consecutive patterns'' determining edge/non-edge relations; in the ``classical'' sense letters in a $u$-match does not have to appear next to each other. This direction of research was actually suggested in \cite{JKPR}, while we refer to \cite{Kit} for more information on various types of patterns in words each of which can be used to define edge/non-edge relations. 

\section*{Acknowledgment}
The author is thankful to Bill Chen and Arthur Yang for their great hospitality during the author's stay at the Center of Combinatorics in Nankai University in June-July 2015, when the paper was written.

\end{document}